\theoremstyle{plain}
\newtheorem{theorem}{Theorem}[section]
\newtheorem{corollary}[theorem]{Corollary}
\newtheorem{lemma}[theorem]{Lemma}
\newtheorem{proposition}[theorem]{Proposition}
\theoremstyle{definition}
\newtheorem{remark}[theorem]{Remark}
\newcommand{\N}{{\mathbb N}}
\newcommand{\Z}{{\mathbb Z}}
\newcommand{\Modr}{\mathrm{Mod}\text{-}}
\newcommand{\Hom}{\operatorname{Hom}}
\newcommand{\Add}{\operatorname{Add}}
\newcommand{\Prod}{\operatorname{Prod}}
\newcommand{\Htp}[1]{\mathbf{K}({#1})}
\begin{document}

\title{$\Sigma$-pure injectivity and Brown representability}

\author{Simion
Breaz}\thanks{Research supported by the
CNCS-UEFISCDI grant PN-II-RU-TE-2011-3-0065}
\address{ "Babe\c s-Bolyai" University, Faculty of Mathematics
and Computer Science, Str. Mihail Kog\u alniceanu 1, 400084
Cluj-Napoca, Romania} \email{bodo@math.ubbcluj.ro}
\date{\today}

\subjclass{16D90, 18G35}

\thanks{ }

\begin{abstract}
We prove that a right $R$-module $M$ is $\Sigma$-pure injective if
and only if $\Add(M)\subseteq \Prod(M)$. Consequently, if $R$ is a
unital ring, the homotopy category $\Htp{\Modr R}$ satisfies the
Brown Representability Theorem if and only if the dual category
has the same property. We also apply the main result to provide
new characterizations for right pure-semisimple rings or to give a
partial positive answer to a question of G. Bergman.
\end{abstract}

\keywords{$\Sigma$-pure injective module; pure-semisimple ring;
$p$-functor; Brown representability theorem; homotopy category.}

\maketitle
\date{}

\section{Introduction}

It is well known that purity plays a central role in the study of
module categories. In particular ($\Sigma$-)pure
injective/projective modules are used in order to describe various
properties for some subcategories of a module category. For
instance the pure-semisimplicity (i.e. all modules are
pure-injective) of $\Modr R$ can be characterized in many ways.
However, there are many open problems related to these notions.

We approach in this note some of these problems.  It was proved by
L. Angeleri, H. Krause and M. Saorin that $\Prod(M)\subseteq
\Add(M)$ if and only if $M$ is $\Sigma$-pure injective and
product-rigid (see \cite[Section 4.2]{Ang} and \cite[Section
3]{Kr-Sa}), and this implies $\Prod(M)=\Add(M)$. We remind that
that $\Prod(M)$ (resp. $\Add(M)$) denotes the class of all direct
summands in direct products (resp. direct sums) of copies of $M$.
We will characterize the converse inclusion under the usual set
theoretic assumption that there are no measurable cardinals. 
Here we use the terminology from \cite{Du-HZ}, hence a cardinal $\lambda=|I|$ is measurable 
if it is uncountable and there exists a 
countably-additive, non-trivial, $\{0,1\}$-valued measure $\mu$ on the power set of $I$ such that $\mu(I)=1$ 
and $\mu(\{x\})=0$ for all $x\in I$.

\begin{theorem} \label{ADD}  $(V=L)$
Let $R$ be a unital ring and $M$ be a right $R$-module. The
following are equivalent:
\begin{enumerate}[{\rm (1)}] \item $\Add(M)\subseteq \Prod(M)$ \item $M$ is
$\Sigma$-pure injective.\end{enumerate}
\end{theorem}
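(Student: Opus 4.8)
The implication (2) $\Rightarrow$ (1) is straightforward. If $M$ is $\Sigma$-pure injective then $M^{(I)}$ is pure injective for every set $I$, and since $M^{(I)}=\varinjlim_{F}M^{F}$ over the finite subsets $F\subseteq I$, each $M^{F}\hookrightarrow M^{I}$ being split, the inclusion $M^{(I)}\hookrightarrow M^{I}$ is a pure monomorphism. A pure monomorphism whose domain is pure injective splits, so $M^{(I)}$ is a direct summand of $M^{I}\in\Prod(M)$, hence $M^{(I)}\in\Prod(M)$. As every object of $\Add(M)$ is a direct summand of some $M^{(I)}$, this yields $\Add(M)\subseteq\Prod(M)$.

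For (1) $\Rightarrow$ (2) I would first reduce $\Sigma$-pure injectivity of $M$ to ordinary pure injectivity of $M$. This uses two standard facts: a direct product of pure injective (equivalently, algebraically compact) modules is again pure injective, and $M$ is $\Sigma$-pure injective as soon as every module in $\Add(M)$ is pure injective (this last condition just says that $M^{(I)}$ is pure injective for all $I$). Assuming, provisionally, that $M$ itself is pure injective: any $N\in\Add(M)$ is, by (1), a direct summand of some product $M^{J}$; that product is pure injective, hence so is its summand $N$. Thus every module in $\Add(M)$ is pure injective and $M$ is $\Sigma$-pure injective. Consequently the entire content of (1) $\Rightarrow$ (2) is the claim that $\Add(M)\subseteq\Prod(M)$ already forces $M$ to be pure injective.

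I would prove this last claim by contraposition. Suppose $M$ is not pure injective, i.e.\ not algebraically compact; then $M$ is a proper pure submodule of its pure injective hull, witnessed by a finitely solvable but unsolvable system of $R$-linear equations with parameters in $M$. On the other hand $M^{(\aleph_0)}\in\Add(M)\subseteq\Prod(M)$, so there is a retraction $\rho\colon M^{J}\twoheadrightarrow M^{(\aleph_0)}$ of the canonical section; composing with $M^{(\aleph_0)}\hookrightarrow M^{\aleph_0}$ gives a family $(\rho_{n})_{n\in\bbN}$ of homomorphisms $M^{J}\to M$ that restricts, along the section, to the identity in each coordinate. The idea is to show that such a coherent family out of the product $M^{J}$ must localise --- each $\rho_{n}$ being forced to factor through a sufficiently small subproduct --- so that the section can be combined with the $\rho_{n}$ to exhibit a solution of the whole system inside $M$, a contradiction. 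This localisation step is exactly where the set-theoretic hypothesis enters: the absence of measurable cardinals (guaranteed by $V=L$) is what promotes the classical slender-module behaviour of homomorphisms out of $M^{\aleph_0}$ to homomorphisms out of $M^{J}$ for an arbitrary index set $J$; equivalently, every non-principal ultrafilter on $J$ is then countably incomplete, the associated reduced powers of $M$ are only $\aleph_1$-algebraically compact, and it is this lack of higher completeness that obstructs the putative retraction when $M$ is not algebraically compact.

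The main obstacle is precisely this step, turning ``$\Add(M)\subseteq\Prod(M)$'' into pure injectivity of $M$. The delicate part is the bookkeeping that converts a retraction $M^{J}\to M^{(\aleph_0)}$, together with an unrealised finitely satisfiable system over $M$, into an outright contradiction, while arranging that every index set appearing along the way is of non-measurable cardinality so that the slenderness/reduced-power machinery applies.
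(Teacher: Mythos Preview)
Your $(2)\Rightarrow(1)$ is fine, and the reduction you make---once $M$ is known to be pure injective, condition (1) upgrades this to $\Sigma$-pure injectivity because each $M^{(I)}$ is then a summand of a pure-injective product $M^K$---is correct. The gap is the remaining step: you have not proved that $\Add(M)\subseteq\Prod(M)$ forces $M$ to be pure injective. What you wrote is a plan, not an argument. You assert that the components $\rho_n\colon M^J\to M$ of a retraction onto $M^{(\aleph_0)}$ must ``localise'' through small subproducts by ``slender-module behaviour'', but slenderness is a property of very special \emph{target} modules and nothing about a general non-pure-injective $M$ supplies it. The ultraproduct remark points the wrong way: reduced powers along countably incomplete ultrafilters are \emph{more} algebraically compact than $M$, not less, and you give no mechanism by which this obstructs the retraction. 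Most importantly, fixing the direct-sum index to be $\aleph_0$ throws away exactly the leverage the argument needs.

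The paper's proof is structurally different and establishes $\Sigma$-pure injectivity directly, without your reduction. One chooses a set $J$ with $|J|>|M|$; then $M^{(J)}\in\Add(M)\subseteq\Prod(M)$ yields a split epimorphism $f\colon M^I\to M^{(J)}$ with $I$ non-measurable (here $V=L$ is used). The engine is a theorem of Dugas and Zimmermann-Huisgen on homomorphisms from products into direct sums: for any descending chain $(P_n)$ of $p$-functors (subfunctors of the identity commuting with products) there are finite $I'\subseteq I$, $J'\subseteq J$ and $n_0$ with
\[
f\Bigl(P_{n_0}\Bigl(\textstyle\prod_{I\setminus I'}M\Bigr)\Bigr)\subseteq \bigoplus_{J'}M+\bigcap_n P_n\bigl(M^{(J)}\bigr).
\]
Since $f$ restricts to a surjection on each $P_n$, a modular-lattice computation and a cardinality count using $|J|>|M|=\bigl|\prod_{I'}M\bigr|$ force $P_{n_0}(M)=\bigcap_n P_n(M)$ for infinitely many copies of $M$ on the right, i.e.\ the chain $(P_n(M))_n$ stabilises. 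Applying this to chains of finite matrix functors $\Hom_R(Z,-)(z)$ (which are $p$-functors, and for which the split epi $f$ gives the required surjectivity) yields the descending chain condition on finite matrix subgroups of $M$, which is the Zimmermann-Huisgen criterion for $\Sigma$-pure injectivity. The cardinality choice $|J|>|M|$ is essential; your choice $|J|=\aleph_0$ gives no such count.
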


In particular, it is well known that a ring $R$ is right
pure-semisimple if and only if there exists a right $R$-module $M$
such that $\Modr R=\Add(M)$ (see \cite[Proposition 2.6]{Sto}). It
is also important to know when we can write $\Modr R=\Prod(M)$ for
some right $R$-module $M$. One of the reasons comes from the
theory of (co)homological functors on triangulated categories.
After A. Neeman introduced Brown representability for the dual,
\cite{Nee}, one of the main questions is to find examples of
triangulated categories $\mathcal{T}$ such that only one of the
categories $\mathcal{T}$ and $\mathcal{T}^{\mathrm{op}}$ satisfies
the Brown Representability Theorem (BRT). For instance, the
derived categories associated to some Grothendieck categories and
their duals satisfy BRT, cf. \cite{Alonso} and \cite{modoi2}. One
idea was to find a (non pure-semisimple) ring $R$ such that the
homotopy category of complexes $\Htp{\Modr R}$ satisfies BRT for
the dual, \cite{Mo-St}, i.e. to find a non pure-semisimple ring
such that $\Modr R=\Prod(X)$, cf. \cite{Mo}. By Theorem \ref{ADD}, this is not possible
under the set theoretic hypothesis $(V=L)$.

\begin{corollary} \label{BRT} $(V=L)$
Let $R$ be a unital ring. The following are equivalent:
\begin{enumerate}[{\rm (1)}]
\item $R$ is right pure-semisimple (i.e. all right $R$-modules are
pure-injective);


\item $\Htp{\Modr R}$ satisfies Brown Representability Theorem;

\item $\Htp{\Modr R}$ satisfies Brown Representability Theorem for
the dual;

\item there exists $X\in\Modr R$ such that $\Modr R=\Add(X)$;

\item there exists $X\in\Modr R$ such that $\Modr R=\Prod(X)$.
\end{enumerate}
\end{corollary}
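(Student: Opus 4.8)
The plan is to deduce everything from Theorem~\ref{ADD} together with two facts already recalled in the introduction. The first is \cite[Proposition 2.6]{Sto}: $R$ is right pure-semisimple if and only if $\Modr R=\Add(X)$ for some module $X$; this is precisely the equivalence (1)$\Leftrightarrow$(4). The second is the translation, due to Modoi and Modoi--\v{S}\v{t}ov\'{\i}\v{c}ek (\cite{Mo},~\cite{Mo-St}), asserting that $\Htp{\Modr R}$ satisfies Brown Representability exactly when $\Modr R=\Add(X)$ for some $X$, and satisfies Brown Representability for the dual exactly when $\Modr R=\Prod(X)$ for some $X$; these are the equivalences (2)$\Leftrightarrow$(4) and (3)$\Leftrightarrow$(5). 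Hence the whole corollary reduces to proving the equivalence (4)$\Leftrightarrow$(5).

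For (4)$\Rightarrow$(5), assume $\Modr R=\Add(X)$. By \cite[Proposition 2.6]{Sto} the ring $R$ is right pure-semisimple, so every right $R$-module is pure-injective; in particular every $X^{(I)}$ is pure-injective, i.e. $X$ is $\Sigma$-pure-injective. Therefore, by the elementary implication (2)$\Rightarrow$(1) of Theorem~\ref{ADD} (equivalently: $X^{(I)}$ is a pure-injective pure submodule of $X^{I}$, hence a direct summand of it), we get $\Add(X)\subseteq\Prod(X)$. Since $\Prod(X)\subseteq\Modr R=\Add(X)$ trivially, we conclude $\Modr R=\Add(X)=\Prod(X)$, and (5) holds with the same module $X$.

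For (5)$\Rightarrow$(4) --- the step where the hypothesis $(V=L)$ genuinely enters, through Theorem~\ref{ADD} --- assume $\Modr R=\Prod(X)$. Then $\Add(X)\subseteq\Modr R=\Prod(X)$, so the nontrivial implication (1)$\Rightarrow$(2) of Theorem~\ref{ADD} forces $X$ to be $\Sigma$-pure-injective, and in particular pure-injective. As products and direct summands of pure-injective modules are pure-injective, every module in $\Prod(X)$ is pure-injective; but $\Prod(X)=\Modr R$, so every right $R$-module is pure-injective, i.e. $R$ is right pure-semisimple. One more application of \cite[Proposition 2.6]{Sto} yields a module $X'$ with $\Modr R=\Add(X')$, which is (4) (and by the previous paragraph $X$ itself works). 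This closes the cycle (1)$\Leftrightarrow$(4)$\Leftrightarrow$(5), and together with (2)$\Leftrightarrow$(4) and (3)$\Leftrightarrow$(5) finishes the proof.

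I do not anticipate a real obstacle in the corollary itself: all the mathematical content sits in Theorem~\ref{ADD}, and the remainder is a formal manipulation of the classes $\Add(X)$ and $\Prod(X)$. The only points that deserve care are invoking the Modoi / Modoi--\v{S}\v{t}ov\'{\i}\v{c}ek dictionary correctly, so that the homotopy-category conditions (2) and (3) are faithfully rendered as the module-theoretic conditions (4) and (5), and observing that (4)$\Rightarrow$(5) uses only the easy direction of Theorem~\ref{ADD}, so that the set-theoretic assumption $(V=L)$ is genuinely needed only in the implication (5)$\Rightarrow$(1).
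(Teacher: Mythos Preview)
Your argument is correct and its heart---deducing (5)$\Rightarrow$(1) from Theorem~\ref{ADD} by noting that $\Add(X)\subseteq\Modr R=\Prod(X)$ forces $X$ to be $\Sigma$-pure-injective, whence every module is pure-injective---is exactly the paper's argument.

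There is one difference in how the cycle is closed. You assert the full equivalence (3)$\Leftrightarrow$(5) as a black box from \cite{Mo},~\cite{Mo-St}, and then prove (4)$\Leftrightarrow$(5) internally. The paper, by contrast, only draws (3)$\Rightarrow$(5) from \cite{Mo-St}, and obtains the reverse direction indirectly via (1)$\Rightarrow$(3), citing \cite[Theorem 5.2]{Sto} (with a pointer to \cite[Remark 11]{Mo}). Since you have already established (5)$\Rightarrow$(1), either route works; but you should double-check that the implication (5)$\Rightarrow$(3) is actually stated in the Modoi references you cite---the paper's choice to go through \cite{Sto} instead suggests that \cite{Mo-St} alone may only give the forward direction. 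If (5)$\Rightarrow$(3) is not explicit there, simply replace that citation by (5)$\Rightarrow$(1)$\Rightarrow$(3) using \cite[Theorem 5.2]{Sto}, and your proof goes through unchanged. Your separate argument for (4)$\Rightarrow$(5) via $\Sigma$-pure-injectivity is correct but, as you note, uses only the trivial direction of Theorem~\ref{ADD}; the paper bypasses it entirely with the chain (4)$\Rightarrow$(1)$\Rightarrow$(3)$\Rightarrow$(5).
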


\section{The proofs}

Let $R$ be a unital ring. The main tool used in the proof of our
theorem was proved in \cite{Du-HZ}. We remind that a functor
$P:\Modr R\to \Modr \Z$ is called a \textsl{$p$-functor} if it is
a subfunctor of the identity functor which commutes with direct
products. We refer to \cite{Hu} for basic properties of these
functors. We note here that every $p$-functor commutes with direct
sums, since it is a subfunctor for the identity functor. From the
same reason, it follows that if $P$ is a $p$-functor and $f:M\to
N$ is a homomorphism, then $P(f):P(M)\to P(N)$ is in fact the
restriction of $f$ to $P(M)$. These functors represent very useful
tools in the study of homomorphisms between direct products and
direct sums, e.g. \cite[Lemma 4]{BHZ79} and \cite[Theorem
2]{Du-HZ}. We start with a result which extends \cite[Theorem
2]{Du-HZ}. 

\begin{theorem}\label{Dugas-H}
Let $(U_i)_{i\in I}$ and $(V_j)_{j\in J}$ be two families of
$R$-modules such that the set $I$ is non-measurable. If
$f:\prod_{i\in I} U_i\to \bigoplus_{j\in J} V_j$ is a homomorphism
and $(P_n)_{n\in\mathbb{N}}$ is a descending chain of $p$-functors
defined on $\Modr R$ then there exist two finite subsets
$I'\subseteq I$, $J'\subseteq J$, and a positive integer $n_0$
such that $$\textstyle{f\left(P_{n_0}\left(\prod_{i\in I\setminus
I'}U_i\right)\right)\subseteq \bigoplus_{j\in J'}V_j+\bigcap_{n\in
\N}P_n\left(\bigoplus_{j\in J} V_j\right).}$$
\end{theorem}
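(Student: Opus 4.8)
The aim is to reduce the statement to the case treated in \cite[Theorem 2]{Du-HZ}, which (roughly) handles a single $p$-functor, or to mimic its argument while carrying along the whole descending chain $(P_n)_{n\in\N}$. The first step is to set up notation: write $U=\prod_{i\in I}U_i$, $V=\bigoplus_{j\in J}V_j$, and $W=\bigcap_{n\in\N}P_n(V)$, and observe that since each $P_n$ is a subfunctor of the identity that commutes with products and sums, $P_n(U)=\prod_{i\in I}P_n(U_i)$ and $P_n(V)=\bigoplus_{j\in J}P_n(V_j)$, and $f$ restricts to maps $P_n(U)\to P_n(V)$ for every $n$. The goal becomes: find finite $I'\subseteq I$, $J'\subseteq J$ and $n_0$ with $f\bigl(P_{n_0}(\prod_{i\in I\setminus I'}U_i)\bigr)\subseteq \bigoplus_{j\in J'}V_j+W$. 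So I may replace $V$ by $V/W$ (noting $W$ is an $R$-submodule of $V$, though not obviously of the form $\bigoplus$ of something) and try to show the image lands in a finite subsum modulo $W$.

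**Key steps.** I would argue by contradiction, following the classical Dugas--Huber/Hill-type combinatorial scheme. Suppose no such $I',J',n_0$ exist. Then I build, by recursion on $n$, an increasing sequence of finite sets $J_0\subseteq J_1\subseteq\cdots$, a decreasing sequence of elements/cofinite pieces of the product, and elements $x_n\in P_n(\prod_{i\in I\setminus I'_n}U_i)$ (for suitable finite $I'_n$) whose images $f(x_n)$ have a nonzero component outside $\bigoplus_{j\in J_n}V_j$ and outside $W$; the point of using $P_n$ at stage $n$ is that the chain is descending, so later elements automatically lie in all earlier $P_k$. One then assembles these into a single element of $P_N(U)$ for appropriate $N$ (using that a product of elements, suitably supported on disjoint blocks of $I$, lands in $P_n(U)=\prod P_n(U_i)$ as long as each coordinate does), and derives a contradiction with $f(x)$ lying in the \emph{direct sum} $V$ — its support must be finite, contradicting the infinitely many forced nonzero components. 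The non-measurability of $I$ enters exactly as in \cite{Du-HZ}: it guarantees that a certain $\{0,1\}$-valued finitely-additive measure obtained from the construction (via an ultrafilter on $I$ recording which coordinates are "hit") must be concentrated on a single point or be trivial, which is what lets us extract the finite set $I'$ rather than merely a "small" set.

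**Main obstacle.** The delicate point is handling the intersection term $\bigcap_n P_n(V)$ correctly: in \cite[Theorem 2]{Du-HZ} there is a single $p$-functor and the target modulus is just $P(\bigoplus V_j)$, whereas here we must show the "error" lies in $\bigoplus_{j\in J'}V_j + W$ with $W=\bigcap_n P_n(V)$, and $W$ need not be a direct summand nor compatible with the decomposition $V=\bigoplus_j V_j$. The resolution I expect is a diagonalization: at stage $n$ one only controls things modulo $P_n(V)$ (not modulo $W$), so the recursion produces, for each $n$, a finite $J^{(n)}$ and integer with $f(P_{n}(\cdots))\subseteq \bigoplus_{j\in J^{(n)}}V_j + P_{n}(V)$; then one argues that the sequence of minimal such finite sets $J^{(n)}$ must stabilize (otherwise, again, a single assembled element of the product would have infinite support in $V$), and at the stabilization index $n_0$ one gets the containment with $J'=J^{(n_0)}$ and the genuine $W$ on the right, because $\bigoplus_{j\in J'}V_j+P_n(V)$ is decreasing in $n$ and its "stable part" is $\bigoplus_{j\in J'}V_j+W$. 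Making this stabilization rigorous — in particular ruling out that the $J^{(n)}$ grow while the contribution still escapes every $W$ — is where the real work lies, and it is exactly here that I would lean most heavily on the machinery and the non-measurability hypothesis as deployed in \cite{BHZ79} and \cite{Du-HZ}.
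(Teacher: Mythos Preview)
Your plan has a genuine gap at exactly the point you flag as the ``main obstacle,'' and the patch you propose does not work as written. You suggest producing, for each $n$, finite sets $J^{(n)}$ with
\[
f\bigl(P_{n}(\textstyle\prod_{i\notin I^{(n)}}U_i)\bigr)\subseteq \bigoplus_{j\in J^{(n)}}V_j + P_{n}(V),
\]
and then arguing that the $J^{(n)}$ stabilize. But this containment is \emph{trivially} true with $J^{(n)}=\varnothing$, since $P_n$ is a subfunctor of the identity and hence $f(P_n(\text{anything}))\subseteq P_n(V)$ automatically. So the stage-$n$ inclusions carry no information, and there is nothing to stabilize. Even if you meant a different modulus at stage $n$ (say $P_{n+1}(V)$), you would still have to pass from ``for each $n$ the error lies in $\bigoplus_{J^{(n)}}V_j+P_n(V)$'' to ``the error lies in $\bigoplus_{J'}V_j+\bigcap_n P_n(V)$,'' and intersections do not distribute over sums, so this step needs an actual argument you have not supplied.

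The paper's proof avoids this difficulty entirely by a different reduction. It first treats the case where $I$ is \emph{countable}: one puts on $U=\prod_{i\in\mathbb N}U_i$ the linear topology with basis $\{\prod_{i\ge n}U_i\}$, checks directly that this topology is complete and Hausdorff, and then invokes \cite[Lemma~1]{Du-HZ}, a general lemma about homomorphisms from complete Hausdorff linearly topologized modules into direct sums; the whole descending chain $(P_n)$ is handled at once because $p$-functors are compatible with this topology (the ``second prototype'' in \cite{Du-HZ}). Only \emph{after} the countable case is settled does non-measurability enter: one lets $\mathcal I$ be the family of subsets $T\subseteq I$ for which the conclusion holds for $f\!\upharpoonright\!\prod_{i\in T}U_i$, shows $\mathcal I$ contains finite sets and is closed under subsets, finite unions, and (using the countable case) countable disjoint unions, and concludes that $I\notin\mathcal I$ would yield a countably additive $\{0,1\}$-measure on $I$, contradicting non-measurability. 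So the role of non-measurability is to reduce from arbitrary $I$ to countable $I$, not to extract a finite $I'$ directly via an ultrafilter as you suggest; and the intersection $\bigcap_n P_n(V)$ is absorbed by the topological lemma rather than by any stabilization argument.
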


\begin{proof}
The proof presented in \cite[Theorem 2]{Du-HZ} is valid for our
case, once we prove that the theorem is valid for the case when
$I$ is countable.

If $I$ is countable, we can apply \cite[Lemma 1]{Du-HZ}. In order
to do this we will prove that the topology defined on a direct
product $U=\prod_{i\in \mathbb{N}} U_i$ by the set
$$\textstyle{\mathcal{B}=\left\{\prod_{i\geq n}U_i\mid n\in \mathbb{N}\right\}}$$ as
a basis of neighborhoods of 0 is complete and Hausdorff. Note that
all $p$-functors are compatible with this topology, cf. the second
prototype described before \cite[Lemma 1]{Du-HZ}. It is not hard
to see that $\bigcap_{n\geq 0}\prod_{i\geq n}U_i=0$, hence this
topological space is Hausdorff.

For every $i\in \N$ we denote by $p_i:U\to U_i$ the canonical
projection. Let $(x^k)_{k\in\mathbb{N}}$ be a Cauchy sequence in
$U$. Then for every integer $n>0$ there exists an integer $u_n>n$
such that for all $v\geq u_n$ we have $x^v-x^{u_n}\in \prod_{i\geq
n+1}U_i$, which means that $p_i(x^v)=p_i(x^{u_n})$ for all $i\leq
n$. We can assume w.l.o.g. that the sequence $(u_n)_{n\in\N}$ is a
strictly increasing sequence of positive integers. If we consider
the element $x\in U$ defined by $p_i(x)=p_i(x^{u_i})$, it follows
that for every $n>0$ and for every $m>n$ we have
$p_i(x^{u_m}-x)=0$ for all $i\in\{1,\dots,n\}$. Then the
subsequence $(x^{u_n})_{n\in \N}$ of the sequence $(x^k)_{k\in
\N}$ converges to $x$. It follows that $(x^k)_{k\in \N}$ is
convergent since it is a Cauchy sequence. Therefore $U$ is
complete.

For the general case we sketch the proof, which is exactly the
same as in \cite{Du-HZ}, for reader's convenience. We
consider the set $\mathcal{I}$ of those subsets $T\subseteq I$ for
which the restriction of $f$ to $\prod_{i\in T} U_i$ satisfies our
claim, and suppose that $I\notin\mathcal{I}$.

We can prove, as in \cite{Du-HZ}, that $\mathcal{I}$ has the
following properties:
\begin{enumerate}[(a)]
\item the set $\mathcal{I}$ contains all finite subsets,

\item the set $\mathcal{I}$ is closed under subsets and finite
unions,

\item if $(J_n)_{n\in\mathbb{N}}$ is a family of pairwise disjoint
subsets of $I$ there exists $n_0\in\mathbb{N}$ such that
$\bigcup_{n>n_0}J_n\in\mathcal{I}$ (hence $\mathcal{I}$ will be
closed under countable unions).
\end{enumerate}
Therefore, we can construct a countably additive
$\{0,1\}$-valuated function on the power set of $I$, hence the
cardinality of $I$ is measurable. This leads to a contradiction,
and it follows that $I\in\mathcal{I}$.
\end{proof}

We also need the following elementary lemma:

\begin{lemma}
Let $V=\bigoplus_{j\in J}V_j$ be a right $R$-module. For every
$j\in J$ we fix a family of submodules $Y_j^n\leq V_j$,
${n\in\N}$. Then
$$\textstyle{\bigcap_{n\in\N} \left(\bigoplus_{j\in J}Y_j^n\right)= \bigoplus_{j\in
J}\left(\bigcap_{n\in\N}Y_j^n\right).}$$
\end{lemma}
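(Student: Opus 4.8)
The plan is to prove both inclusions of the claimed equality. The inclusion $\bigoplus_{j\in J}\left(\bigcap_{n\in\N}Y_j^n\right)\subseteq \bigcap_{n\in\N}\left(\bigoplus_{j\in J}Y_j^n\right)$ is immediate and monotone: if $x$ lies in the left-hand side, then for each $j$ its $j$-th component lies in $Y_j^n$ for every $n$, so $x\in\bigoplus_{j\in J}Y_j^n$ for each fixed $n$, and hence $x$ lies in the intersection over $n$. No finiteness of support is even needed here beyond the fact that we are inside $\bigoplus_{j\in J}V_j$.

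For the reverse inclusion, I would take $x\in\bigcap_{n\in\N}\left(\bigoplus_{j\in J}Y_j^n\right)$ and show $x\in\bigoplus_{j\in J}\left(\bigcap_{n\in\N}Y_j^n\right)$. Write $x=(x_j)_{j\in J}$ with $x_j\in V_j$; since $x\in V=\bigoplus_{j\in J}V_j$, the support $S=\{j\in J\mid x_j\neq 0\}$ is finite. For each $n$, the hypothesis $x\in\bigoplus_{j\in J}Y_j^n$ means precisely that $x_j\in Y_j^n$ for every $j\in J$ (the $j$-th component of an element of the direct sum $\bigoplus_j Y_j^n$ lies in $Y_j^n$). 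Therefore, for every $j\in J$ and every $n\in\N$ we have $x_j\in Y_j^n$, i.e. $x_j\in\bigcap_{n\in\N}Y_j^n$. Consequently $x=(x_j)_j$ is an element of $\prod_{j\in J}\left(\bigcap_{n\in\N}Y_j^n\right)$ whose support is contained in the finite set $S$, so $x\in\bigoplus_{j\in J}\left(\bigcap_{n\in\N}Y_j^n\right)$, as desired.

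Combining the two inclusions gives the claimed identity. There is essentially no obstacle: the only subtlety worth stating explicitly is that membership in a direct sum $\bigoplus_{j\in J}W_j$ inside $\prod_{j\in J}V_j$ is equivalent to having each component in $W_j$ together with finite support, and that the finite-support condition is automatically inherited from $x\in V$ since $\bigcap_{n\in\N}Y_j^n\leq V_j$. So the proof is a direct componentwise verification, and I would present it in just a couple of lines along the above lines.
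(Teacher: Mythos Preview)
Your proof is correct and follows essentially the same approach as the paper: both argue componentwise, using the uniqueness of the decomposition in $\bigoplus_{j\in J}V_j$ to conclude that $x_j\in Y_j^n$ for all $j$ and all $n$, hence $x_j\in\bigcap_n Y_j^n$. The paper is slightly terser (it proves one inclusion and remarks that the other is analogous), but the content is identical.
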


\begin{proof}
If $x=(x_j)\in \bigoplus_{j\in J}V_j$ is in $\bigoplus_{j\in
J}Y_j^n$ for all $n\in \N$ (with $x_j\in Y_j$ for all $j\in J$)
then for all $j\in J$ and for all $n\in \N$ we have $x_j\in Y_j^n$
since the components $x_j$ are unique with respect to the direct
decomposition $\bigoplus_{j\in J}V_j$. Then for all $j\in J$ we
have $x_j\in \bigcap_{n\in\N}Y_j^n$, hence $x\in \bigoplus_{j\in
J}\left(\bigcap_{n\in\N}Y_j^n\right)$. Therefore $\bigcap_{n\in\N}
\left(\bigoplus_{j\in J}Y_j^n\right)\subseteq \bigoplus_{j\in
J}\left(\bigcap_{n\in\N}Y_j^n\right).$ The converse inclusion can
be proved in the same way.
\end{proof}

Now we can prove the main result of this section:

\begin{proposition}\label{main-prop}
Let $(U_i)_{i\in I}$ and $(V_j)_{j\in J}$ be two families of
non-zero (right) $R$-modules such that $|I|$ is infinite and
non-measurable and $|J|>|U_i|$ for all $i\in I$. Suppose that
there exists an epimorphism $f:\prod_{i\in I} U_i\to
\bigoplus_{j\in J}V_j$.

If $(P_n)_{n\in\N}$ is a descending family of $p$-functors defined
on $\Modr R$ such that the homomorphisms $P_n(f)$ are epimorphisms
for all $n$, then there exists an infinite subset $L\subseteq J$ such
that
for every $j\in L$ the sequence $P_n(V_j)$ is stationary.
\end{proposition}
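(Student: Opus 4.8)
The plan is to apply Theorem~\ref{Dugas-H} to the epimorphism $f:\prod_{i\in I}U_i\to\bigoplus_{j\in J}V_j$ together with the descending chain $(P_n)_{n\in\N}$. This yields finite subsets $I'\subseteq I$, $J'\subseteq J$ and an integer $n_0$ with
\[
\textstyle f\left(P_{n_0}\left(\prod_{i\in I\setminus I'}U_i\right)\right)\subseteq \bigoplus_{j\in J'}V_j+\bigcap_{n\in\N}P_n\left(\bigoplus_{j\in J}V_j\right).
\]
By the Lemma, $\bigcap_{n\in\N}P_n(\bigoplus_{j\in J}V_j)=\bigoplus_{j\in J}\left(\bigcap_{n\in\N}P_n(V_j)\right)$, so the right-hand side is contained in $\bigoplus_{j\in J'}V_j+\bigoplus_{j\in J}W_j$ where $W_j:=\bigcap_{n\in\N}P_n(V_j)$. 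The goal is to show that for infinitely many $j$ one has $W_j=P_n(V_j)$ for all large $n$, i.e. the chain $P_n(V_j)$ stabilizes; since it is a descending chain this is equivalent to $W_j$ actually being reached, not merely being the intersection.

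**Exploiting the cardinality hypothesis.** First I would argue that the left-hand side $f(P_{n_0}(\prod_{i\in I\setminus I'}U_i))$ is "large" in $\bigoplus_{j\in J}V_j$. Since $I'$ is finite and $|I|$ is infinite, $\prod_{i\in I\setminus I'}U_i$ is still a direct product over a non-measurable index set, and $P_{n_0}(f)$ restricted to it is still surjective onto $P_{n_0}(\bigoplus_{j\in J}V_j)$ up to the finitely many coordinates in $I'$ — more precisely, I would compare $f(\prod_{i\in I}U_i)$ with $f(\prod_{i\in I'}U_i)+f(\prod_{i\in I\setminus I'}U_i)$ and use that $\prod_{i\in I'}U_i$, being a finite product of the $U_i$, has cardinality $\le\max_i|U_i|\cdot|I'|<|J|$; hence its image misses the $V_j$-coordinate for all but fewer than $|J|$ indices $j$. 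Since $P_{n_0}(f)$ is onto $\bigoplus_{j\in J}P_{n_0}(V_j)$, for each $j$ there is an element whose $j$-th coordinate exhausts $P_{n_0}(V_j)$; combining, for all but $<|J|$ indices $j$ we get $P_{n_0}(V_j)\subseteq (\bigoplus_{j\in J'}V_j+\bigoplus_{j}W_j)$ read in the $j$-th coordinate, which (for $j\notin J'$) forces $P_{n_0}(V_j)\subseteq W_j$, hence $P_{n_0}(V_j)=W_j$ and the chain is stationary from $n_0$ on. As $|J|$ is infinite, "all but $<|J|$ indices" is an infinite set $L$, giving the conclusion.

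**The main obstacle.** The delicate point is the projection/cardinality counting in the previous paragraph: extracting, from surjectivity of $P_{n_0}(f)$ on the whole product, a statement about individual coordinates $P_{n_0}(V_j)$ after throwing away the image of the finite subproduct $\prod_{i\in I'}U_i$. One must be careful that $f$ need not respect the direct-sum decomposition of the target, so "the $j$-th coordinate of the image" has to be handled via the canonical projections $\pi_j:\bigoplus V_j\to V_j$ and the observation (from the remark before Theorem~\ref{Dugas-H}) that $p$-functors act by restriction, so $\pi_j$ commutes with each $P_n$ in the appropriate sense. I expect the bookkeeping — which indices land in the finite exceptional set $J'$, which land in the image of the small subproduct, and why the complement is still infinite — to be the part requiring the most care, while the use of Theorem~\ref{Dugas-H} and the Lemma is routine.
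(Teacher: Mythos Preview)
Your approach is correct and essentially the same as the paper's: both apply Theorem~\ref{Dugas-H}, use the Lemma to rewrite $\bigcap_n P_n(\bigoplus_j V_j)$, and then exploit that the finite subproduct $\prod_{i\in I'}U_i$ has cardinality $<|J|$ to conclude that $P_{n_0}(V_j)=\bigcap_n P_n(V_j)$ for all but $<|J|$ indices $j$. The paper packages the last step more cleanly via modularity of the subgroup lattice and an induced epimorphism $P_{n_0}(\prod_{i\in I'}U_i)\twoheadrightarrow \bigoplus_{j\in J\setminus J'} P_{n_0}(V_j)/\bigcap_n P_n(V_j)$, whereas you argue coordinate-wise via the projections $\pi_j$; note also that your cardinality bound should read $\left|\prod_{i\in I'}U_i\right|=\prod_{i\in I'}|U_i|$ rather than $\max_i|U_i|\cdot|I'|$, though either is $<|J|$.
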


\begin{proof}
By Theorem \ref{Dugas-H}, there exist two finite subsets
$I'\subseteq I$, $J'\subseteq J$, and a positive integer $n_0$
such that $$\textstyle{f\left(P_{n_0}\left(\prod_{i\in I\setminus
I'}U_i\right)\right)\subseteq \bigoplus_{j\in
J'}V_j+\bigcap_{n>0}P_n\left(\bigoplus_{j\in J} V_j\right).}$$
Since $P_{n_0}$ is a subfunctor for the identity functor,
$P_{n_0}(f)$ is the restriction of $f$ to
$P_{n_0}\left(\prod_{i\in I} U_i\right)$, hence
$f\left(P_{n_0}\left(\prod_{i\in I} U_i\right)\right)\subseteq
P_{n_0}\left(\bigoplus_{j\in J} V_j\right)$. Therefore,
$$\textstyle{f\left(P_{n_0}\left(\prod_{i\in I\setminus I'}
U_i\right)\right)\subseteq \left[\left(\bigoplus_{j\in
J'}V_j\right)+\bigcap_{n>0}P_n\left(\bigoplus_{j\in J}
V_j\right)\right]\bigcap P_{n_0}\left(\bigoplus_{j\in J}
V_j\right).}$$ Recall that the subgroup lattice of every abelian
group is modular. Using this we obtain
\begin{align*}\textstyle{ f\left( P_{n_0}\left(\prod_{i\in I\setminus I'}
U_i\right)\right) }  \subseteq & \textstyle{
\left[\left(\bigoplus_{j\in J'}V_j\right)\bigcap
P_{n_0}\left(\bigoplus_{j\in J}
V_j\right)\right]+\left[\bigcap_{n>0}P_n\left(\bigoplus_{j\in J}
V_j\right)\right] } \\  = & \textstyle{ \
P_{n_0}\left(\bigoplus_{j\in J'}
V_j\right)+\left[\bigcap_{n>0}P_n\left(\bigoplus_{j\in J}
V_j\right)\right] } \\ = & \textstyle{ \
P_{n_0}\left(\bigoplus_{j\in J'}
V_j\right)+\left[\bigcap_{n>0}\left(\bigoplus_{j\in J}
P_n(V_j)\right)\right]  } \\ = & \textstyle{ \
P_{n_0}\left(\bigoplus_{j\in J'} V_j\right)+\left[\bigoplus_{j\in
J}\left(\bigcap_{n>0} P_n(V_j)\right)\right]}\\ = & \textstyle{ \
P_{n_0}\left(\bigoplus_{j\in J'} V_j\right)+\left[\bigoplus_{j\in
J\setminus J'}\left(\bigcap_{n>0} P_n(V_j)\right)\right]. }
\end{align*}

Since $P_{n_0}(f):P_{n_0}\left(\prod_{i\in I} U_i\right)\to
P_{n_0}\left(\bigoplus_{j\in J} V_j\right)$ is an epimorphism, it
follows that $f$ induces an epimorphism of abelian groups
$$\overline{f}:\frac{P_{n_0}\left(\prod_{i\in I} U_i\right)}{P_{n_0}\left(\prod_{i\in I\setminus I'}
U_i\right)}\twoheadrightarrow \frac{P_{n_0}\left(\bigoplus_{j\in
J} V_j\right)}{P_{n_0}\left(\bigoplus_{j\in J'}
V_j\right)+\left[\bigoplus_{j\in J\setminus J'}\left(\bigcap_{n>0}
P_n(V_j)\right)\right]}\ .$$ But \begin{align*}
\frac{P_{n_0}\left(\bigoplus_{j\in J}
V_j\right)}{P_{n_0}\left(\bigoplus_{j\in J'}
V_j\right)+\left[\bigoplus_{j\in J\setminus J'}\left(\bigcap_{n>0}
P_n(V_j)\right)\right]} \cong & \frac{\bigoplus_{j\in J\setminus
J'} P_{n_0}\left( V_j\right)}{\left.\bigoplus_{j\in J\setminus
J'}\left(\bigcap_{n>0} P_n(V_j)\right)\right.}\\ \cong &
\bigoplus_{j\in J\setminus J'} \frac{ P_{n_0}\left(
V_j\right)}{\bigcap_{n>0} P_n(V_j)}\ .
\end{align*}
It follows that \begin{align*}\textstyle{\left|\bigoplus_{j\in
J\setminus J'} \frac{P_{n_0}\left( V_j\right)}{\bigcap_{n>0}
P_n(V_j)}\right|\leq \left|P_{n_0}\left(\prod_{i\in I'}
U_i\right)\right|=\left| \frac{P_{n_0}\left(\prod_{i\in I}
U_i\right)}{P_{n_0}\left(\prod_{i\in I\setminus
I'}U_i\right)}\right|},\end{align*} hence the cardinality of the
set $$L=\textstyle{\left\{j\in J\setminus J' \mid P_{n_0}(V_j)=
\bigcap_{n>0} P_n(V_j)\right\}}$$
is infinite since $|J|>\left|\prod_{i\in I'} U_i\right|\geq
\left|P_{n_0}\left(\prod_{i\in I'} U_i\right)\right|$.
\end{proof}

Recall from \cite[Observation 3]{Hu} that the \textsl{finite
matrix functors} are precisely the functors $\Hom_R(Z,-)(z):\Modr
R\to \Modr \Z$ with finitely presented $Z$ and $z\in Z$.

\begin{corollary}\label{corV}
Let $(U_i)_{i\in I}$ and $(V_j)_{j\in J}$ be two families of
(right) $R$-modules such that $J$ is infinite, $|J|>|U_i|$ for all
$i\in I$, and all modules $V_j$ are isomorphic to a fixed module
$V$.

Suppose that $I$ is non-measurable and there exists a pure
epimorphism $$\textstyle{f:\prod_{i\in I} U_i\to \bigoplus_{j\in
J}V_j.}$$ Then $V$ is $\Sigma$-pure injective.
\end{corollary}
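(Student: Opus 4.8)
The plan is to reduce everything to Proposition~\ref{main-prop} via the classical characterization of $\Sigma$-pure injectivity (Zimmermann): a module $V$ is $\Sigma$-pure injective if and only if it satisfies the descending chain condition on finite matrix subgroups, i.e. on subgroups of the form $H(V)$ with $H$ a finite matrix functor. So I would argue by contradiction. Suppose $V$ is not $\Sigma$-pure injective; then $V$ fails this descending chain condition, so there is a strictly descending chain $W_1\supsetneq W_2\supsetneq\cdots$ of finite matrix subgroups of $V$, say $W_n=H_n(V)$ for finite matrix functors $H_n$. Since finite matrix functors are closed under finite intersections (intersections of subfunctors of the identity being computed pointwise), the functors $P_n:=H_1\cap\cdots\cap H_n$ are again finite matrix functors; they form a descending chain of subfunctors of the identity on $\Modr R$, and $P_n(V)=W_1\cap\cdots\cap W_n=W_n$, so $(P_n(V))_n$ is strictly descending. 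In particular $P_n(V)\neq\bigcap_{m>0}P_m(V)$ for every $n$.

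Next I would check that $(P_n)$ meets the hypotheses of Proposition~\ref{main-prop}. Each finite matrix functor $\Hom_R(Z,-)(z)$ is a $p$-functor, since $\Hom_R(Z,-)$ commutes with direct products and hence so does the subfunctor $\Hom_R(Z,-)(z)$ of the identity; and, as recalled before Theorem~\ref{Dugas-H}, $P_n(f)$ is then just the restriction of $f$ to $P_n\big(\prod_{i}U_i\big)$. Because $f$ is a \emph{pure} epimorphism, the short exact sequence $0\to\Ker f\to\prod_{i\in I}U_i\to\bigoplus_{j\in J}V_j\to 0$ is pure, so $\Hom_R(Z,-)$ carries it to a short exact sequence for every finitely presented $Z$; evaluating at $z\in Z$ shows that $P_n(f)\colon P_n\big(\prod_{i}U_i\big)\to P_n\big(\bigoplus_{j}V_j\big)$ is an epimorphism for every $n$.

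Now I would apply Proposition~\ref{main-prop} to $(P_n)$: the set $I$ is infinite and non-measurable, $J$ is infinite, $|J|>|U_i|$ for all $i$, and the $P_n(f)$ are epimorphisms, so there is an infinite subset $L\subseteq J$ such that $(P_n(V_j))_n$ is stationary for every $j\in L$. Fixing $j\in L$ and an isomorphism $\varphi\colon V\to V_j$, naturality of the subfunctor $P_n$ gives $\varphi(P_n(V))=P_n(V_j)$ for all $n$, hence $(P_n(V))_n$ is stationary, contradicting the strict descent established above. Therefore $V$ is $\Sigma$-pure injective.

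I expect the only genuinely load-bearing step to be the passage from ``$f$ is a pure epimorphism'' to ``$P_n(f)$ is an epimorphism'': this is precisely where purity enters, through the characterization of pure-exact sequences by the property that $\Hom_R(F,-)$ keeps them exact for $F$ finitely presented. The other ingredients — Zimmermann's theorem, closure of finite matrix subgroups under finite intersection, and the bookkeeping with subfunctors of the identity — are standard, and the conclusion is then immediate from Proposition~\ref{main-prop}.
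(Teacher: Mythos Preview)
Your argument is correct and follows essentially the same route as the paper: the paper's proof is a terse three sentences noting that finite matrix functors send pure epimorphisms to epimorphisms, applying Proposition~\ref{main-prop} to conclude that every descending sequence of finite matrix functors is stationary on $V$, and then invoking \cite[Theorem~6]{Hu} (the Zimmermann characterization you use). Your contradiction framing and the explicit passage to the intersections $P_n=H_1\cap\cdots\cap H_n$ simply spell out what the paper leaves implicit; the only small omission is that you should note $I$ must be infinite (if $V=0$ the conclusion is trivial, and if $V\neq 0$ a finite $I$ would force $\left|\prod_{i\in I}U_i\right|<|J|\le\left|\bigoplus_{j\in J}V_j\right|$, contradicting surjectivity of $f$).
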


\begin{proof}
Since finitely presented modules are projective with respect to
pure exact sequences, it is easy to see that all finite matrix
functors map pure epimorphisms to epimorphisms. Therefore, every
decreasing sequence of matrix functors is stationary on $V$. By
\cite[Theorem 6]{Hu}, $V$ is $\Sigma$-pure injective.
\end{proof}

\begin{remark}\label{rem-split}
If the homomorphism $f$ in the above results, Proposition
\ref{main-prop} and Corollary \ref{corV}, is split-epi then the
proofs are valid for all descending families of $p$-functors.
\end{remark}

\noindent{\textsl{The proof for Theorem \ref{ADD}.} Since
$\Add(M)\subseteq \Prod(M)$, it follows that for all sets $J$ the
right $R$-module $M^{(J)}$ is a direct summand of a direct product
of copies of $M$. If we take $J$ of cardinality greater than the
cardinality of $M$, the conclusion follows from Corollary
\ref{corV} (or Remark \ref{rem-split}) since all cardinals are
non-measurable. \qed

\medskip

\noindent{\textsl{The proof for Corollary \ref{BRT}.}
(1)$\Leftrightarrow$(2) is proved in \cite[Proposition 2.6]{Sto},
(2)$\Leftrightarrow$(4) and (3)$\Rightarrow$(5) follow from
\cite[Theorem 1 and Theorem 2]{Mo-St}. (1)$\Rightarrow$(3) follows
from \cite[Theorem 5.2]{Sto}, see also \cite[Remark 11]{Mo}. The
implication (5)$\Rightarrow$(1) is a consequence of Theorem \ref{ADD}.
\qed

\medskip

In \cite[Question 12]{Ber}, G. Bergman asks if we can deduce that
the canonical embedding $R^{(\omega)}\to R^\omega$ splits provided
that $R^{(\omega)}$ is isomorphic to a direct summand in
$R^{\omega}$. Corollary \ref{corV} can be used to provide a
partial positive answer to this question. In order to do this, let us recall from \cite[Satz 3 and Satz 4]{Ulam}
that if $\kappa$ is a non-measurable cardinal then $2^\kappa$ is also non-measurable. 

\begin{corollary}
Let $M$ be a module of non-measurable cardinality $\kappa$. If
$M^{\left(2^\kappa\right)}$ can be embedded as a direct summand in
a direct product $M^\lambda$ of copies of $M$ such that $\lambda$
is non-measurable (in particular in $M^{2^\kappa}$) then $M$ is
$\Sigma$-pure injective, thus the canonical embedding
$M^{(\omega)}\to M^\omega$ splits.
\end{corollary}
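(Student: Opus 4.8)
The plan is to obtain the conclusion directly from Corollary \ref{corV} by taking all the $U_i$ and all the $V_j$ equal to $M$. First I would dispose of the degenerate possibility that $\kappa$ is finite: a module with finitely many elements has only finitely many submodules, so every descending chain of matrix subgroups on it is stationary and $M$ is $\Sigma$-pure injective by \cite[Theorem 6]{Hu}. Hence we may assume $\kappa$ is infinite, and in particular $M\neq 0$.

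Next, fix an index set $I$ with $|I|=\lambda$ and set $U_i:=M$ for every $i\in I$, and an index set $J$ with $|J|=2^\kappa$ and $V_j:=M=:V$ for every $j\in J$. Then $J$ is infinite and, by Cantor's theorem, $|J|=2^\kappa>\kappa=|M|=|U_i|$ for all $i\in I$, so the cardinality hypotheses of Corollary \ref{corV} are met; and $I$ is non-measurable by assumption. By hypothesis $M^{(2^\kappa)}\cong\bigoplus_{j\in J}V_j$ is isomorphic to a direct summand of $M^\lambda=\prod_{i\in I}U_i$, so the corresponding projection is a split epimorphism $f\colon\prod_{i\in I}U_i\twoheadrightarrow\bigoplus_{j\in J}V_j$, which is in particular a pure epimorphism. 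Corollary \ref{corV} then gives that $V=M$ is $\Sigma$-pure injective. For the parenthetical case, when $M^{(2^\kappa)}$ is a direct summand of $M^{2^\kappa}$ one simply takes $\lambda=2^\kappa$: this cardinal is non-measurable because $\kappa$ is, by \cite[Satz 3 and Satz 4]{Ulam} as recalled before the statement, so the argument applies unchanged.

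Once $M$ is $\Sigma$-pure injective, $M^{(\omega)}$ is pure injective; since a direct sum is always pure in the corresponding direct product, the canonical map $M^{(\omega)}\to M^\omega$ is a pure monomorphism, and a pure monomorphism with pure-injective domain splits. This settles the last assertion. The proof carries no real obstacle of its own, since all the work has been absorbed into Theorem \ref{Dugas-H} and Corollary \ref{corV}; the only points requiring attention are the set-theoretic bookkeeping — namely that $2^\kappa$ (and, more generally, $\lambda$) is non-measurable, which is exactly the quoted result of Ulam, and the strict inequality $|J|>|U_i|$, which is Cantor's theorem.
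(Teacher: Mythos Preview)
Your proof is correct and follows exactly the approach the paper intends: the corollary is stated without proof, immediately after recalling Ulam's result and announcing that Corollary~\ref{corV} provides the answer, so the paper's implicit argument is precisely the application of Corollary~\ref{corV} with all $U_i=V_j=M$ that you carry out. Your added care in disposing of the finite case, verifying the strict inequality $|J|>|U_i|$ via Cantor, and spelling out why $\Sigma$-pure injectivity of $M$ forces the canonical embedding $M^{(\omega)}\to M^\omega$ to split is welcome detail that the paper omits.
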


Finally, we mention that the condition $\Modr R=\Prod(X)$ also
appears as a characterization of rings which are
copure-semisimple, i.e. all modules are copure injective. 
If $M$ is a right $R$-module, we denote by $E(M)$ its
injective envelope. We recall from
\cite{field}, \cite{Hiremath} and
\cite{Wis} that a module $M$ is
\textsl{finitely cogenerated} if its injective envelope $E(M)$ is
isomorphic to $E(S_1)\oplus\dots \oplus E(S_n)$, where $S_i$ are
simple modules. The module $M$ is \textsl{finitely copresented} if
$E(M)$ and $E(M)/M$ are finitely cogenerated, and a short exact
sequence is \textsl{copure} if every finitely copresented module
is injective relative to this exact sequence.  A module is
\textsl{copure injective} if it is injective relative to all copure
short exact sequences. We mention that direct products of copure
injective modules are copure injective, \cite[Proposition
3]{Hiremath}.

We obtain an interesting corollary, concerning these rings:

\begin{corollary} $(V=L)$
A ring $R$ is right pure-semisimple if and only if all right
$R$-modules are copure injective.
\end{corollary}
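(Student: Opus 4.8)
The plan is to obtain the corollary essentially for free by splicing together Corollary~\ref{BRT} with the standard description of right copure-semisimple rings, so that no module theory beyond invoking already-cited results is needed. The fact to recall, from the literature on copure injectivity (cf.\ \cite{field}, \cite{Hiremath}, \cite{Wis}), is this: a ring $R$ is right copure-semisimple --- i.e.\ every right $R$-module is copure injective --- if and only if there is a right $R$-module $X$ with $\Modr R=\Prod(X)$. The backbone of this equivalence is that, straight from the definition of a copure exact sequence, every finitely copresented right $R$-module is copure injective; that copure injectivity is inherited by direct products (\cite[Proposition~3]{Hiremath}) and by direct summands; and that $R$ is right copure-semisimple precisely when every right $R$-module is a direct summand of a direct product of finitely copresented modules. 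In that case one may take $X$ to be the product of a (set-sized) representative family of the finitely copresented right $R$-modules.

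With this recalled, the corollary follows by chaining two equivalences. If $R$ is right pure-semisimple, then Corollary~\ref{BRT} supplies a module $X$ with $\Modr R=\Prod(X)$ --- this is the implication $(1)\Rightarrow(5)$ there, and it uses no set-theoretic hypothesis --- so by the description above every right $R$-module is copure injective. Conversely, if every right $R$-module is copure injective, the description yields $X$ with $\Modr R=\Prod(X)$, and Corollary~\ref{BRT}, now via $(5)\Rightarrow(1)$ --- the step resting on Theorem~\ref{ADD}, hence on $(V=L)$ --- gives that $R$ is right pure-semisimple. In particular, the implication from right pure-semisimple to right copure-semisimple needs no special set theory, and $(V=L)$ enters only in the converse.

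The only genuinely load-bearing ingredient beyond the body of the paper is the equivalence between right copure-semisimplicity of $R$ and the condition $\Modr R=\Prod(X)$, which I am quoting from \cite{field}, \cite{Hiremath}, \cite{Wis}; everything else is routine bookkeeping with Corollary~\ref{BRT}. Its delicate direction --- and where I would take most care --- is the ``if'' part: the hypothesis $\Modr R=\Prod(X)$ hands us only some module $X$, not one visibly assembled from finitely copresented modules, so one must check that an arbitrary such $X$ is itself copure injective; granting that, every right $R$-module, being a direct summand of a power of $X$, is copure injective by the two closure properties above.
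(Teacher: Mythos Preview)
Your converse direction (all modules copure injective $\Rightarrow$ $R$ right pure-semisimple) is exactly the paper's: copure-semisimple gives $\Modr R=\Prod(M)$ for $M$ the product of a representative set of finitely copresented modules (this is \cite[Theorem~7]{Hiremath}), and then Corollary~\ref{BRT}, $(5)\Rightarrow(1)$, finishes under $(V=L)$.

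The forward direction, however, has a real gap. You route pure-semisimple $\Rightarrow$ copure-semisimple through Corollary~\ref{BRT}, $(1)\Rightarrow(5)$, obtaining $\Modr R=\Prod(X)$ for \emph{some} $X$, and then appeal to the implication ``$\Modr R=\Prod(X)$ for some $X$ $\Rightarrow$ $R$ copure-semisimple.'' You yourself flag that this needs $X$ to be copure injective and write ``granting that'' --- but you never grant it, and neither do the references you cite. What \cite{Hiremath} provides is the direction you use for the converse: copure-semisimple $\Rightarrow$ $\Modr R=\Prod(X_0)$ with $X_0$ built from finitely copresented modules. The reverse implication, with an \emph{arbitrary} $X$, is not there; and trying to reduce to the special $X_0$ again requires knowing $X$ is a summand of a product of finitely copresented modules, which is essentially the point at issue. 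Filling the gap via Corollary~\ref{BRT} would be circular: under $(V=L)$ it would send you back to pure-semisimplicity, which is what you started with and still have to push to copure-semisimplicity.

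The paper avoids this by arguing the forward direction directly, without passing through $\Prod(X)$: over a right pure-semisimple ring every finitely presented module is finitely copresented (\cite[Theorem~4.5.4]{Prest}), hence copure injective; every module is a direct sum $\bigoplus U_i$ of such modules; and since all pure exact sequences split, $\bigoplus U_i$ is a summand of $\prod U_i$, which is copure injective by \cite[Proposition~3]{Hiremath}. This is short, needs no set-theoretic hypothesis, and sidesteps the unproved equivalence entirely.
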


\begin{proof}
Suppose that $R$ is right pure-semisimple. Using \cite[Theorem
4.5.4]{Prest} we observe that all finitely presented modules are
finitely copresented, hence they are copure injective. But all
modules are direct sums of finitely presented modules. Moreover,
since all pure exact sequences split, it follows that every
canonical embedding $0\to \bigoplus_{i\in I}{U_i}\to \prod_{i\in
I} U_i$ splits, hence every module is a direct summand of a copure
injective module. Therefore every module is copure injective.

Conversely, if every module is copure injective, then $\Modr
R=\Prod M$, where $M$ is the direct product of a set of
representatives of the isomorphism classes of all cofinitely
related modules, \cite[Theorem 7]{Hiremath}. The conclusion
follows from Corollary \ref{BRT}.
\end{proof}


\subsection*{Acknowledgements:} I would like to thank to
George-Ciprian Modoi who suggested me this topic, with whom I had
useful discussions on Brown Representability Theorem. I also thank
to the referee for his/her very useful critical comments.

 \end{document}